\newcommand{\mP}{{\mathbb P}}
\newcommand{\G}{\mathcal{G}}
\newcommand{\N}{\mathbb N}
\newtheorem{theorem}{Theorem}
\newtheorem{lemma}{Lemma}
\newtheorem{rem}{Remark}
\newtheorem{example}{Example}
\newtheorem{definition}{Definition}
\newtheorem{corollary}{Corollary}
\title{Voting Profiles Admitting All Candidates as Knockout Winners}
\author[1]{Bernard De Baets}
\author[2]{Emilio De Santis}
\affil[1]{\small KERMIT, Dept. of Data Analysis and Mathematical Modelling, Ghent University, Coupure links 635, Ghent 9000, Belgium. \textit{bernard.debaets@ugent.be}}
\affil[2]{\small Department of Mathematics, Sapienza University of Rome, Piazzale Aldo Moro 5, 00185, Rome, Italy. \textit{desantis@mat.uniroma1.it}}
\date{} 
\begin{document}

\maketitle

\begin{abstract}
A set of $2^n$ candidates is presented to a commission. At every round, each member of this commission votes by pairwise comparison, and one-half of the candidates is deleted from the tournament, the remaining ones proceeding to the next round until the $n$-th round (the final one) in which the final winner is declared. The candidates are arranged on a board in a given order, which is maintained among the remaining candidates at all rounds. A study of the size of the commission is carried out in order to obtain the desired result of any candidate being a possible winner. For $2^n$ candidates with $n \geq 3$, we identify a voting profile with $4n -3$ voters such that any candidate could win simply by choosing a proper initial order of the candidates. Moreover, in the setting of a random number of voters, we obtain the same results, with high probability, when the expected number of voters is large. 
\vspace{1em}

\noindent \textbf{Keywords:} Majority graph, Manipulation.

\noindent \textbf{AMS MSC 2010:} 91B12, 91B14.
\end{abstract}

\section{Introduction}

In this paper we consider a set of \emph{candidates} (or competitors) that are involved in a knockout tournament. In the scientific literature, such tournaments, also called \emph{single-elimination or sudden death tournaments}, are studied for different practical and theoretical purposes, see, e.g.,~\cite{kim2015fixing,MS2023,Mossel-Xu2020,W2010, Williams2016} and citations therein. A critical drawback of knockout tournaments is the fact that the winner of the tournament could heavily depend on the initial order of the candidates, also called \emph{bracket}. In fact, tournament organizers could favor a given candidate by selecting a particular bracket; see, e.g.,~\cite{aziz18, MS2023, Marchand2002} and references therein.

In the setting considered in this paper, there is a commission of voters and each voter ranks all candidates in a \emph{preference list} (no ties allowed), i.e., each voter provides a permutation of all candidates.  The candidates are arranged on a board in a given order, with the relative order among them maintained at all rounds. Running through this order, candidates are presented in pairs to the commission, which decides by majority which of the two candidates wins the match and is allowed to move on to the next round. To be precise, each voter only looks at her own preference list and votes for the candidate who precedes the other one. It should be clear that the winner of the competition is a function of the preference lists of all voters, called \emph{voting profile}, and the bracket. 
In the literature on voting theory, 
the \emph{preference pattern} or \emph{majority graph} is constructed as a function of the voting profile and establishes the results of matches between any two candidates; however, the majority graph does not give the number of votes received by the two candidates. 

Manipulation in voting theory involves strategies used by voters or participants to sway the results of an election or decision-making process in their favor, often compromising fairness and genuine preferences. This issue is a key focus in social choice theory, which examines how collective decisions are made and how they can be influenced by strategic actions. The Gibbard--Satterthwaite theorem, see, e.g.,~\cite{Gib1973,isaksson2012geometry,sen2001}, illustrates that in most voting systems, some form of manipulation is inevitable when participants are rational and have incentives to influence results. 

For knockout tournaments, there is a different form of influence, coming from the organizers, called \emph{agenda problem}: the problem of strategically controlling the structure of the tournament (the schedule or agenda) to influence the outcome (or maximize the winning probability) of a specific player or a group of players. This involves designing the tournament bracket in a way that favors certain candidates and disadvantages other ones.  
Elimination-style elections often resemble knockout tournaments. These knockout tournament formats, which have been explored in various decision-making contexts, have garnered significant attention in fields such as artificial intelligence, see, e.g.,~\cite{aziz18,W2010,vu2009}, economics and operations research, see, e.g.,~\cite{Conitzer16, laslier97,rosen}.  

In real life, there are numerous examples of elimination-style elections similar to knockout tournaments. Although these formats are less common in formal political elections, similar elimination methods are employed in primary elections and candidate selection processes, particularly during party conventions or leadership contests. For instance, in certain political primaries, candidates may be gradually eliminated based on vote thresholds. Additionally, photography competitions sometime adopt a knockout format, where members vote on pairs of photos, allowing the preferred image to advance to the next round; see, e.g., Knock-Out PDI Competition 2025\footnote{See https://beaulieucameragroup.co.uk/news/407-knock-out-pdi-competition-2025}. Singing competitions also frequently utilize similar knockout structures\footnote{Sanremo Music Festival 2024: three young singers were selected through three knockout tournaments, each consisting of four young contestants, and then advanced to the main competition.}.

The central questions of this paper are the following: 
\begin{itemize}
\item[\bf{Q1}] Does there exist a majority graph that admits each candidate as winner, provided one chooses a proper bracket? 
\item[\bf{Q2}] How large must the \emph{voting profile} be, i.e.,~the set of preference lists of a commission, that admits each candidate as winner? 
\end{itemize}
From Theorem~6 in~\cite{W2010}, we can answer {\bf{Q1}} affirmatively, at least for large numbers of candidates (see also~\cite{KSV17,StaVas11}).
However, in view of {\bf{Q2}}, we need an explicit construction that helps to keep the number of voters low.

To that end, we answer the first question in a constructive manner, confirming that it is indeed feasible to construct majority graphs that allow any candidate to emerge victorious through careful selection of the tournament bracket. For the second inquiry, we establish an upper bound on the necessary size of the voting profile, which is logarithmic in the number of candidates. 


The solutions are constructed in two steps. In the first step, we construct a sequence of \emph{preference patterns} or \emph{majority graphs}, denoted $(G_n)_{n \geq 3}$, which establishes the results of the matches between any two candidates and answer affirmatively to question {\bf{Q1}}. In the second step, 
we construct the voting profiles $(\tilde{R}_n)_{n \geq 3}$ such that $\tilde{R}_n$ gets $G_n$ as majority graph. Regarding the first step, we construct the majority graphs $(G_n)_{n \geq 3}$ recursively. The fundamental property of $G_n$ is that, in a knockout tournament with a number of candidates equal to $2^{n}$, any candidate can win given an appropriate bracket.

Moving to the second step, it is well known that any majority graph can be obtained by a simple majority decision of individuals with a suitable voting profile~\cite{ErdosMoser,McGarvey1953, Stearns}. Thus, there exists a voting profile $\tilde{R}_n$ associated with the majority graph $G_n$. For a construction of $(\tilde{R}_n)_{n \geq 3}$ with a small number of voters,    we proceed by recursion. The initial voting profile~$\tilde{R}_3$ is realized through Stearns' paper~\cite{Stearns}. For the recursion, to pass from~$\tilde{R}_n$ to~$\tilde{R}_{n+1}$, we make use of Lemma~\ref{EMnew} below and Lemma~2 of~\cite{ErdosMoser}. Following our approach, the voting profile~$\tilde{R}_{n+1}$ equals the size of~$\tilde{R}_n$ plus 4. Since~$\tilde{R}_3$ consists of nine preference lists, the voting profile $\tilde{R}_n$ corresponds to $4n-3$ preference lists. 

In the last part of this paper, we consider as an example the framework of a random number of voters (see, e.g.,~\cite{CS23,FP99, makris2008}). Under the hypothesis that the number of voters follows a Poisson distribution, we establish that the random voting profile behaves like the deterministic one, with high probability, if the Poisson parameter is sufficiently large.

This paper is structured as follows. In Section~\ref{Notation}, we give some notation and definitions. In Section~\ref{sec2bis}, we construct the majority graphs $(G_n)_{n \geq 3 }$. Moreover, in Theorem~\ref{albero}, we show that a knockout tournament with $2^{n}$ candidates (with $n \geq 3$) and a majority graph $G_n$ can return any candidate as the winner of the tournament by choosing an appropriate bracket. In Section~\ref{construction voting}, we construct the voting profile $\tilde{R}_n$ associated with the majority graph~$G_n$, for $n \geq 3$. In Section~\ref{sec:random}, we study an example of random voting profiles in the
Poisson framework. Finally, in Section~\ref{conc}, we present some comments and possible future developments.

\section{Notations and definitions} \label{Notation}
We start this section by introducing some notations that will be used further on. Let $[m]:=\{1,\ldots,m\}$. We will consider only the case $m = 2^n$, for any $n \in \N$. Let $\Pi(n)$ denote the set of permutations of the elements in $[2^n]$. 

All $2^n$ candidates are initially arranged on a board in a given order (a permutation $\boldsymbol{\pi} \in \Pi(n)$). Then, pairwise matches are carried out, following the initial order to form the pairs. After the first round of the matches, all winners are paired again, still respecting the initial order, and so on, until the winner of the competition is decreed at the $n$-th round. To determine the winner of a match between two candidates, there is a commission of $v \in \N$ voters in which each member assigns a vote to a candidate, the candidate getting the most votes being the winner of the match. Each member of the commission has a fixed preference list in which all candidates are written in the order of preference (no ties are allowed). The preference list of a voter corresponds to a permutation in $\Pi(n)$. When two candidates $i,j \in [m]$ are paired, each voter gives her vote to the preferred candidate, that is, she votes for $i$ if $i$ precedes $j$ in her preference list; otherwise she votes for $j$. 

The collection of all the preference lists of the voters of the commission is commonly called the \emph{voting profile}. Suppose that $V$ is the set of candidates, typically $V =[m]$, and $[v]$ the set of voters, then the voting profile is represented as a matrix $R= (r_{\ell,i}: \ell \in [v], i \in V)$, where $r_{\ell, \cdot}$ denotes the preference list of voter $\ell \in [v]$, i.e. the row $r_{\ell, \cdot}$ is a permutation of the elements of $V$.

For the reader's convenience, we recall the standard definition of a directed graph that will be used in this article.

\begin{definition}
A directed graph $G = (V, E)$ is an ordered pair, where
$V$ is a set whose elements are called \emph{vertices}. Furthermore, 
$E$ is a set of ordered pairs of vertices, called \emph{arrows}.
\end{definition}

On the basis of the voting profile $R$, we construct the \emph{majority graph} (also called \emph{preference pattern}).

\begin{definition}[Majority graph] \label{Mgraph}
  For any voting profile $R$ on a set of candidates $V$, the majority graph $G_R$ is a directed graph, such that its set of vertices is the set of candidates $V$ and for every two candidates $a,b \in V$, $(a,b)$ is an arrow in this graph if and only if candidate $a$ is preferred over candidate $b$ by more than half of the voters. We also write that $R$  generates the directed graph $G$ when $G=G_R$.
\end{definition} 

In the following, we will use the concepts of \emph{induced directed graph} and \emph{isomorphism between directed graphs}.

\begin{definition}[Induced directed graph]\label{indu}
Let ${\displaystyle G=(V,E)} $ be a directed graph and let 
$S\subseteq V$ be any subset of $V$. Then the induced directed graph
$G[S]$ is the directed graph whose vertex set is
$S$ and $(a,b)$ is an arrow of $ G[S]$ if and only if $(a,b ) \in E $ and $a,b \in S$. 
\end{definition}

\begin{definition}[Isomorphism between directed graphs]\label{iso}
Let $G_1=(V_1,E_1)$ and $G_2=(V_2,E_2)$ be two directed graphs. We say that $G_1$ and $G_2$ are isomorphic if there exists a bijection $f: V_1 \to V_2 $ such that 
\[
(u,v)  \in E_1 \iff (f(u), f(v)) \in E_2\,.
\]
\end{definition}

Next, we provide some additional notations. For a sequence $(R_n)_{n \in [N]}$ of matrices with the same number of columns $m$, we denote by $(R_1, \ldots , R_N)_V$  the \emph{vertically glued matrix}, i.e.,~the matrix that has as rows all rows of matrices $R_1, \ldots , R_N$. Note that if the matrices $R_1, \ldots , R_N$ are voting profiles on a common set of candidates, then $(R_1, \ldots, R_N)_V$ represents a new voting profile on the same set of candidates, namely the collection of all the preference lists reported in the different matrices $R_1, \ldots , R_N$. For a sequence $(v_n)_{n \in [N]}$ of (row) vectors we define the \emph{concatenated vector} $(v_1, \ldots , v_N)$ as the vector with all elements of $v_1, \ldots , v_N$ listed sequentially. Given a vector $v = (v_1, v_2, \ldots, v_n)$, we denote by $\bar v$ the vector $(v_n, v_{n-1}, \ldots, v_1)$ with all components of $v $ in reverse order.

\section{Construction of majority graphs}
\label{sec2bis} 
We have explained above how to construct the majority graph starting from a voting profile. The collection of finite directed graphs without loops is denoted by $\G$. It is well known that every $G \in \G$ can be generated by some voting profile~$R$ (see~\cite{McGarvey1953}, \cite{Stearns} and~\cite{ErdosMoser}, in historical order). 

Therefore, for any $G =(V, E)\in \G$, there exists a voting profile $R$ that generates the results prescribed by $G$ for all pairs
of candidates in $V$. Actually, in the following, we will consider preference patterns without ties, called \emph{strict} preference patterns; such preference patterns are nothing else but complete directed graphs. For the set of candidates~$[2^n]$, the strict preference patterns (or majority graphs) are collected in the space~$\mathcal{H}(n)$. 

Note that given a voting profile $R$ and the bracket $\boldsymbol{\pi}\in \Pi (n)$, one can list all the matches that will take place in the knockout tournament along with their outcomes. Furthermore, in order to reconstruct all the results of the knockout tournament, it suffices to know the initial order $\boldsymbol{\pi}\in \Pi (n)$ and the majority graph $G \in \mathcal{H}(n)$ associated with~$R$. In fact, even if $G$ cannot provide the number of votes obtained by the two candidates in a match, it provides the result of the match itself. 

In particular, the winner of the competition is a function of the majority graph~$G \in \mathcal{H}(n)$ and of the initial order $\boldsymbol{\pi} \in \Pi (n)$. Thus, we define the mapping
\[
w_n: \mathcal{H}(n) \times \Pi (n) \to [ 2^n]
\]
which returns the winner of the game with $2^n$ candidates, given the majority graph and the initial order of the candidates.

For any majority graph $G\in \mathcal{H} (n)$, we consider the
image of $w_n (G, \cdot)$, i.e., 
\[
w_n  (G, \Pi (n)  ) = \{ w_n  (G, \boldsymbol{\pi} ) :  
 \boldsymbol{\pi} \in \Pi (n) \}\,. 
\]
We are interested in establishing for which values of  $n \in \mathbb{N}$ there exists a 
majority graph $G \in \mathcal{H} (n)$ such that   
\begin{equation}\label{tutti}
w_n  (G, \Pi (n)  ) = [2^n]\,,
\end{equation}
i.e., all the candidates can win just by changing the initial order on the board. Along the paper, the majority graphs and the brackets of interest will be explicitly constructed. 

We start  by defining the  majority graph $G_3=
 (V_3, E_3) \in \mathcal{H} (3)$  where  $V_3=[8]$ and 
all the  arrows of  $E_3$   are listed below
\begin{equation} \label{pref}
\begin{array}{c}
(1, 2)  , \qquad (1 , 3 ), \qquad (1 ,  5) ,\qquad (2 ,  3 ),
 \qquad (2 ,  4 ), \qquad (2 , 6) , \qquad (2 ,  7) ,   \\
(3 , 4 ), \qquad (3 ,  5) , \qquad (3 ,  6 ),\qquad (3,  8) , \qquad (4 ,  1) , \qquad (4 ,  5), \qquad (4,  8) ,\\
(5 , 2) ,  \qquad  (5,  6) , \qquad (5 , 7),
\qquad (6, 1),  \qquad (6,  4), 
\qquad (6,  7) , \qquad (6, 8 ),  \\
(7 ,  1) ,  \qquad  (7 ,  3 ), \qquad (7 ,  4) ,
\qquad (7, 8),  \qquad (8, 1), 
\qquad (8, 2) , \qquad (8,  5). 
\end{array}
\end{equation}

The choice to start from index 3, for the first considered majority graph,  seems unnatural at this stage but in the following this  choice will avoid to subtract the index 3 in many formulas, which would make them less clear.

In the following lemma, we show that the directed graph $G_3$ admits any candidate $i \in [8]$ as a winner of the knockout tournament simply by choosing an appropriate bracket.

\begin{lemma}\label{8conc} 
Let $[8]$ be the set of candidates, then
$w_3  (G_3, \Pi (3)  ) = [8]$.
\end{lemma}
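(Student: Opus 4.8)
The plan is to verify \eqref{tutti} for $n=3$ directly, by exhibiting, for each candidate $c \in [8]$, an initial order $\boldsymbol{\pi}_c \in \Pi(3)$ with $w_3(\boldsymbol{\eta}_0, \boldsymbol{\pi}_c) = c$. Since a knockout tournament on $8$ candidates has only three rounds, each such verification is a short finite computation: given $\boldsymbol{\pi}_c = (a_1, \ldots, a_8)$, resolve the four first-round matches $a_1$ vs $a_2$, $a_3$ vs $a_4$, $a_5$ vs $a_6$, $a_7$ vs $a_8$ using the arrows in \eqref{pref}, then the two semifinals, then the final, and check the output equals $c$. So the proof is essentially a table with eight rows.

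First I would record a few structural observations about $\boldsymbol{\eta}_0$ to make the search for the brackets systematic rather than brute-force. Each vertex has out-degree either $3$ or $4$ (the tournament is ``almost regular''), and one can read off, for each candidate $c$, the set $\mathrm{Beats}(c) = \{j : c \triangleright j\}$ of candidates it directly defeats. A candidate $c$ wins the tournament under $\boldsymbol{\pi}_c$ iff one can partition the other seven candidates into $c$'s three opponents (one per round) together with a suitably self-consuming subtree on the remaining four, i.e.\ $c$ beats its round-$1$ opponent $x_1$, beats the winner $x_2$ of some $2$-candidate block, and beats the winner $x_3$ of some $4$-candidate block. Concretely: pick $x_1 \in \mathrm{Beats}(c)$; pick a pair $\{y,z\}$ with, say, $y \triangleright z$ and $y \in \mathrm{Beats}(c)$, so the block winner $x_2 = y$; and pick the remaining four candidates, arranged so that their subtree winner $x_3$ lies in $\mathrm{Beats}(c)$. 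Because every vertex has in-degree $3$ or $4$ as well, there is a lot of freedom, and one expects such a decomposition to exist for each $c$; the task reduces to finding it and writing down the bracket.

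I would then present the eight brackets explicitly in a display (or an itemized-free paragraph), each followed by the three rounds of outcomes, e.g.\ for the ``hardest-looking'' candidates — those beaten directly by many others, such as $1$, $4$, $5$, $8$ — one chooses the initial order so that $1$'s few victims ($2,3,5$) are positioned to meet $1$ exactly at the three rounds while the complementary four-block ($4,6,7,8$, say) collapses to one of $\{2,3,5\}$. The candidates with larger out-degree (like $2$, $3$, $6$, $7$) are easier and can even be seeded near the front. After listing all eight, I note that $w_3(\boldsymbol{\eta}_0, \Pi(3)) \supseteq \{1,\ldots,8\}$, and since trivially $w_3(\boldsymbol{\eta}_0, \Pi(3)) \subseteq [8]$, equality holds.

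The main obstacle is not conceptual but bookkeeping: one must actually locate a valid bracket for \emph{every} one of the eight candidates and double-check each three-round computation against the twenty-eight arrows in \eqref{pref}, since a single misread arrow invalidates a row. A secondary subtlety is making sure the chosen $\boldsymbol{\pi}_c$ is a genuine permutation (all eight candidates used exactly once) and that the ``relative order maintained at all rounds'' convention is respected — but with only $8$ leaves this is automatic once the leaf order is fixed. I expect the write-up to be a single lemma-proof consisting of the eight-line table of brackets and outcomes, with perhaps one sentence of the structural remark above to guide the reader.
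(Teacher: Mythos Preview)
Your approach is correct and is exactly what the paper does: it simply lists eight explicit initial orders $\boldsymbol{\pi}_1,\ldots,\boldsymbol{\pi}_8$ and leaves the three-round verifications to the reader, with no structural commentary at all. One small slip to fix in your write-up: in your illustrative sentence for candidate $1$, the four-candidate block cannot be $\{4,6,7,8\}$ and still ``collapse to one of $\{2,3,5\}$'' --- the block winner is always one of its own members, so one of $2,3,5$ must sit inside that block (e.g.\ the paper uses $\boldsymbol{\pi}_1=(1,2,3,4,5,6,7,8)$, where the right half $\{5,6,7,8\}$ collapses to $5\in\mathrm{Beats}(1)$).
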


\begin{proof}   
Let us fix the initial orders of the candidates $\boldsymbol{\pi}^{(i)} \in \Pi (3)$, for $i \in [8]$, as follows 
\begin{equation} \label{base}
\begin{array}{c}
\boldsymbol{\pi}^{(1)} = (1,2,3,4,5,6,7,8)  , \quad  
\boldsymbol{\pi}^{(2)} = (2,3,4,1,6,7,8,5)   ,    \\
\boldsymbol{\pi}^{(3)} = (3,6,4,1,8,2,5,7)  , \quad  
\boldsymbol{\pi}^{(4)} = (4,1,8,2,5,6,7,3),    \\
\boldsymbol{\pi}^{(5)} = (5,6,7,8,2,3,4,1)  , \quad  
\boldsymbol{\pi}^{(6)} = (6,7,8,5,1,2,3,4),     \\
\boldsymbol{\pi}^{(7)} = (7,8,1,2,3,5,6,4)  , \quad  
\boldsymbol{\pi}^{(8)} = (8,1,5,6,2,3,7,4).
\end{array}
\end{equation}
One can easily check that $w_3(G_3,\boldsymbol{\pi}^{(k)}  ) = k$ for any $ k \in [8]$.
\end{proof}

Starting from the directed graph $G_3$, defined in~\eqref{pref}, we  construct, for any integer $n \geq 4$, the  majority graph
\[
G_n = (V_n , E_n)  \in \mathcal{H} (n) \,, 
\]
whose set of vertices $V_n $ is equal to $[2^n]$ and whose set of arrows is described below.
To construct the graph, we start with a partition of the vertices in classes. For the set of vertices $V_n = [2^n]$, we consider the partition $(C_q: q \in 2^{n-3} )$, with  
\[
  C_q =[8q] \setminus [8(q-1)]\,.
\]
We write $i \sim j $ when $i$ and $j$ belong to the same class.

For a given integer $n \geq 4 $ and $q \in 2^{n-3}$, we impose that the induced directed graph~$G_n [C_q]$ is isomorphic with the directed graph $G_3$, where the bijection $f: C_q \to V_3 (= C_1)$ is given by the following translation 
\begin{equation}\label{GI}
f(i) = i- 8(q-1), \text{ for any } i \in C_q\,. 
\end{equation}
This relation establishes all the arrows between the vertices belonging to the same class~$C_q$. Then, if $i, j \in V_n$ belong to different classes with $i <j$, the arrow $(i,j) $ belongs to $ E_n$ if and only if $i+j$ is even; otherwise $(j,i)$ belongs to $E_n$. Thus, the arrows are defined for any pair of vertices.

Consider, as an example, the graph $G_5 =(V_5, E_5)$ with $V_5 = [2^5]$ partitioned into four classes $C_1$, $C_2$, $C_3$ and $C_4$. Then  $28, 31 \in C_4$ and $(31, 28) \in E_5 $ being $(7,4) \in E_3$ (see~\eqref{pref} for the definition of $E_3$). Furthermore, $(1, 23) \in E_5$ because the vertices $1, 23 $ belong to different classes, namely $C_1$ and $C_3$, and $1+23$ is even. Note that, for $n \geq 4$, by construction, the induced directed graph $G_n \big [\, [ 2^{n-1}] \, \big ] $ coincides with the directed graph~$G_{n-1}$; furthermore, it is isomorphic to $G_n \big [\, [2^n ] \setminus  [ 2^{n-1}] \, \big ]$. 

Now, for $n \geq 4$ and $j \in [2^n]$,  we define a bracket
${\boldsymbol{\pi}^{(n; \, j)}} \in\Pi (n )$ such that 
\[
w_n (G_n ,{\boldsymbol{\pi}^{(n; \,  j)}}   ) =j\,. 
\] 
Thus, we find that any candidate $j\in [2^n]$ can win.

We will start from the permutations $\boldsymbol{\pi}^{(1)}, \ldots, \boldsymbol{\pi}^{(8)} $ introduced in formula \eqref{base}. For $i, j \in [8]$, let $\boldsymbol{\pi}^{(i)}(j)$ be the element at the $j$-th position of $\boldsymbol{\pi}^{(i)} $. For example, $\boldsymbol{\pi}^{(7)} (2)=8$, see~\eqref{base}.
For $j \in [2^n]$, we also define 
\[
b_j := \left \lceil \frac{j}{8} \right \rceil -1
\]
which means that $j \in C_{b_j +1}$. 

Consider an integer $n \geq 4$ and a number of candidates equal to $2^n$. For 
$j \in [2^n]$,  if $j$ is even, then we define the bracket ${\boldsymbol{\pi}^{( n; \, j)}} =
({\boldsymbol{\pi}^{( n; \, j)}}(1) , {\boldsymbol{\pi}^{( n; \, j)}} (2), \ldots , {\boldsymbol{\pi}^{( n; \, j)}} (2^n))$ as follows: 
\begin{equation}\label{ordinerem1}
{\boldsymbol{\pi}^{(n; \, j)}}(k ) := \left \{
 \begin{array}{ll}
 \boldsymbol{\pi}^{(j - 8 b_k )} ( k-8b_k )+8 b_k   &\text{, for } k \sim j;\\
 \boldsymbol{\pi}^{(1)}(k- 8 b_k ) + 8b_k     &\text{, for } k \not \sim j \text { with } k < j; \\ 
  \boldsymbol{\pi}^{(2)}(k -  8b_k) +  8b_k     &\text{, for } k \not \sim j  \text { with } k > j. 
\end{array} \right .
\end{equation}
Note that n the first line of \eqref{ordinerem1}, $(j - 8 b_k)$ and $(j -8 b_j)$ are equal, since $k \sim j$; moreover, 
$  (j - 8 b_k)  , \,    (k - 8 b_k) \in [8]$. Therefore,
$ \boldsymbol{\pi}^{(j - 8 b_k)} $ is one of the permutations defined in~\eqref{base}. See Example~\ref{EXAMPLE1} at the end of this section. 

For $j \in [2^n]$, if $j$ is odd, then we define the bracket ${\boldsymbol{\pi}^{(n; \, j)}}$ as follows:
\begin{equation}\label{ordinerem2}
\boldsymbol{\pi}^{(n; \, j)}(k) := \left \{
 \begin{array}{ll}
 \boldsymbol{\pi}^{(j - 8 b_k)} ( k-8b_k )+8 b_k   &\text{, for } k \sim j;\\
 \boldsymbol{\pi}^{(2)}(k- 8 b_k) + 8b_k     &\text{, for } k \not \sim j \text { with } k < j; \\ 
  \boldsymbol{\pi}^{(1)}(k -  8b_k) +  8b_k     &\text{, for } k \not \sim j  \text { with } k > j. 
\end{array} \right .
\end{equation}
Note that the permutations defined in~\eqref{ordinerem1} and~\eqref{ordinerem2} can be seen as compositions of permutations that individually act on the single classes $(C_q: q \in 2^{n-3})$. 

In the following theorem, we show that the directed graph $G_n$, seen as a majority graph, solves our
problem showing that any candidate can be the winner of the tournament simply by choosing a suitable bracket for the candidates.

\begin{theorem}\label{albero}
Let $n \geq 3$  be an integer, then for any $j \in [2^n]$ it holds that
\begin{equation} \label{formula-w}
w_n  (G_{n}, \boldsymbol{\pi}^{(n; \, j)})   = j\, . 
\end{equation}
\end{theorem}
\begin{proof} 
Lemma~\ref{8conc} expresses that~\eqref{formula-w} holds for $n=3$. For $n \geq 4$, let $ j \in [2^n]$ be an even number. Then candidate~$j $ wins the first three rounds because until the third round, the structure of the majority graph is the same in all classes, i.e., the directed graphs $G_n [C_q]$, for $q \in [2^{n-3}]$, are isomorphic. Hence, the first line of~\eqref{ordinerem1} establishes this result. Moreover,  all candidates of the form $8 k+1$, with $k \in \N_0 $ and such that $8k +1 < j$, win the first three rounds; see the second line of~\eqref{ordinerem1}. Finally, all players of the form $8k+2 $, with $k \in \N_0 $ and such that $j < 8k +2 \leq 2^n$, win the first three rounds; see the third line of~\eqref{ordinerem1}. By construction of $G_{n}$, the player $j$ will win, after the third round, matching any candidate $k$, with $k$ odd and $k<j$, and will win against any candidate $k$, with $k$ even and $k>j$. Therefore, candidate $j$ will be the winner of the tournament. 

For $n \geq 4$ and $j \in [2^n]$ odd, we use the construction in~\eqref{ordinerem2} for the bracket 
$\boldsymbol{\pi}^{(n; \, j)}$, but the proof is analogous to the previous case, and we leave it to the reader.
\end{proof}

The result obtained in Theorem~\ref{albero} for $n \geq 3 $ fails for $n =1,2$. 
For $n =1$, there are two candidates and a single match, so only one candidate can be the winner. For $n =2$, i.e., four candidates, any candidate, in order to win the tournament should win in two matches.
There are six possible matches. Hence, for any $G \in \mathcal{H} (2)$, only three of the candidates can win two matches and there is at least one candidate who can win at most one match, so this candidate cannot be the winner of the tournament for any bracket. 

\medskip 
Now, we give the definition of the seeded setting, which corresponds to the fact that the bracket has some constrains such that, for example, seeds 1 and 2 cannot match each other before the final (see, e.g., \cite{MS2023}).

\begin{definition} \label{seeded}
Given the set $[2^n]$ of candidates and a positive integer $k < n$, we say that an ordered sequence $S_{n,1},\ldots, S_{n,k}  \subset [2^n]$  is a  \emph{$k$-seeded system for the candidates $[2^n]$} if the following conditions holds:
\begin{itemize}
    \item[(a)] $S_{n,1},\ldots, S_{n,k}$ are disjoint subsets of $[2^n]$; 
    \item[(b)] $|S_{n,1}| =2 $ and $|S_{n, \ell} | = 2^{\ell -1}$, for $\ell = 2, \ldots , k$.
\end{itemize}
Furthermore, we say that a \emph{knockout tournament is 
$k$-seeded} if the candidates belonging to $S_1$ can match only in the final round and, for $u =2, \ldots, k$,  the candidates in $\bigcup_{\ell=1}^u S_\ell $ cannot match   before the $u$-th final round. 
A bracket for a $k$-seeded knockout tournament with $2^n$ candidates is said to be
\emph{valid} if it satisfies the seed constraints on the sets $S_{n,1},\ldots , S_{n,k}$.
\end{definition}

Without loss of generality, we choose once and for all the sets $S_{n,1},\ldots , S_{n,k}$. 
Define 
\begin{equation}\label{primo}
S_{n,1} = \{1,  2^{n-1} +1\}
\end{equation}
and, for $u \geq 2$, 
\begin{equation}\label{ric}
S_{n,u} =  \bigcup_{\ell =1}^{u-1}  (S_{n, \ell} +2^{n-u})\,, 
\end{equation}
where $S+a$, with $S$ a set of real numbers and $a$ a number, is defined as 
\[
S +a  = \{s+a: s\in S \}\,. 
\]

A different choice of $S_{n,1},\ldots , S_{n,k}$ corresponds to a renaming of the candidates; for this reason, all choices are equivalent. We will show that the brackets defined in \eqref{ordinerem1} and~\eqref{ordinerem2} are valid, i.e., they satisfy the constraints of the $k$-seeded knockout tournament with seeding candidates $S_{n,1},\ldots , S_{n,k}$ defined in \eqref{primo}.

\begin{theorem}\label{cor-seed1}
 For any $n \in \N $, any positive integer $k \leq n-3 $ and $i \in [2^n]$, 
the brackets $\boldsymbol{\pi}^{(n; \, i)}$ defined in \eqref{ordinerem1} and~\eqref{ordinerem2} are valid for a  
  $k$-seeded knockout tournament with $2^n$ candidates 
 and seeding candidates $S_{n,1},\ldots , S_{n,k}$ defined in \eqref{primo}. 
\end{theorem}

\begin{proof}
The bracket in which all candidates are ordered by increasing numbers, i.e. $(1,2,3, \ldots, 2^n)$, is valid. 
Now, any permutation $ \boldsymbol{\pi}^{(n; \, i)}$
can be seen as composition of permutations that individually act on a single class $C_q$. In fact, for $\ell \in [2^n]$, 
\[
\left \lceil \frac{\ell}{8} \right \rceil = \left  \lceil \frac{\boldsymbol{\pi}^{(n; \, i)}(\ell )}{8} \right \rceil.
\]
Therefore, after the first three turns, the seeding constraints satisfied by $(1,2,3, \ldots, 2^n)$ are also satisfied by $\boldsymbol{\pi}^{(n; \, i)}$. Thus, with $k \leq n-3 $, one finds that any $\boldsymbol{\pi}^{(n; \, i)}$ defined in~\eqref{ordinerem1} and~\eqref{ordinerem2} is valid, i.e., they satisfy the constraints of the $k$-seeded knockout tournament with seeding candidates $S_{n,1},\ldots , S_{n,k}$.  
\end{proof}

 Considering the majority graph $G_n$ and applying Theorems~\ref{albero}--\ref{cor-seed1}, we observe that for  $k \leq n-3$, every candidate can win a $k$-seeded tournament provided a suitable bracket is chosen.
We conclude this section by presenting an example for a $1$-seeded system. 
 
\begin{example} \label{EXAMPLE1}
{\rm
 Consider 16 candidates, with candidates 1 and 9 only meeting in the last round, i.e., $S_{4,1} =\{1, 9\}$. Considering the majority graph $G_4= ([16], E_4)$ and choosing the bracket $\boldsymbol{\pi}^{(4; \, i)}$, we can make any candidate $i \in [16]$ win the 1-seeded tournament. For instance, if we want candidate 12 to win the tournament, then the bracket in~\eqref{ordinerem1} is
 \arraycolsep=2pt
 \begin{eqnarray}\label{exampleGB}
    \boldsymbol{\pi}^{(4; \, 12)}&=&(1, 2, 3, 4, 5, 6, 7, 8,     4 +8, 1+8, 8+8, 2+8, 5+8, 6+8, 7+8, 3+8 ) \nonumber  \\
&=&(1, 2, 3, 4, 5, 6, 7, 8,     12, 9, 16, 10, 13, 14, 15, 11 )\,. 
        \end{eqnarray}
The first eight components of this bracket correspond to $\boldsymbol{\pi}^{(1)}$, as prescribed by the second line of~\eqref{ordinerem1}. The last eight components correspond to $\boldsymbol{\pi}^{(4)}$, with each component translated by 8, as prescribed by the first line of~\eqref{ordinerem1}. The third line of~\eqref{ordinerem1} is not used in this case because there are only two classes $C_1$ and $C_2$. Candidates~1 and 12 will advance to the final round and candidate~12 will beat candidate~1 because (12,1) belongs to $E_4$ (since $12+1$ is odd, as explained below~\eqref{GI}). Furthermore, the constraints imposed by $S_{4,1} =\{1, 9\}$ are satisfied, because 1 is in the first half of the bracket and 9 in the second half. Therefore, the bracket in~\eqref{exampleGB} is valid according to Definition~\ref{seeded}. 

We emphasize that we have explicitly constructed both the majority graph and the bracket, which turned out to be particularly simple. 
}
\end{example}
 
\section{Construction of the voting profile} \label{construction voting}
In this section, we construct, in an algorithmic way, a voting profile that generates the majority graph $G_{n}$, for any integer $n \geq 3$. McGarvey~\cite{McGarvey1953} proved that any majority graph can be generated by a suitable voting profile. Several years later, Stearns~\cite{Stearns} proved the same result while reducing the number of voters needed. In particular, he showed that $m+2$ (resp.~$m+1$) voters suffice to obtain any given majority graph on a set of $m$ candidates when $m$ is even
(resp.~odd). Erd{\H{o}}s and Moser~\cite{ErdosMoser} designed an algorithm that generates any majority graph on a set of $m$ candidates with $c_1 m /\log{m}$ voters, where $c_1$ is a fixed positive constant. Therefore, the result of 
Erd\H{o}s and Moser, at least for large $m$, requires a smaller number of voters than that of Stearns. Our problem is related to these constructions, but we are only interested in a specific voting profile that generates $G_{n}$ when there are $2^{n}$ candidates. 
To that end, we will use the solution of Stearns~\cite{Stearns}, Erd\H{o}s--Moser's method and Lemma~\ref{EMnew} below,
which turns out to establish a link between~\cite{ErdosMoser} 
and~\cite{Stearns}.

Next, we will use the following simple notation.

\begin{definition} \label{card-commission}
The \emph{size} of a voting profile $R$ is denoted by $\# R$; it is the number of rows in $R$, i.e., the number of preference lists that form $R$. 
\end{definition}

\begin{lemma}\label{EMnew} 
Let $c>1$ and $r>0 $ be integers. Suppose that a family of finite directed graphs 
 $(G^{(\ell )} =( V^{(\ell )} , { E}^{(\ell)})  : \ell \in [c ])$ is such that $V^{(i )} \cap V^{(j )} = \emptyset$, for any $i\neq j \in [c]$. 
Assume that 
$(R^{(\ell )}:\ell \in [c ])$ 
is a family of voting profiles
such that $R^{(\ell)}$ generates $G^{(\ell)}$ and $\# R^{(\ell)}  = 2r $, for $\ell \in [c ]$. 
Then there exists a voting profile $R$ with $\# R =2r $ that generates $G:= (\bigcup_{\ell=1}^c V^{(\ell)}, \bigcup_{\ell =1}^c {E}^{(\ell)} )$. 
\end{lemma}

\begin{proof}
We directly provide the voting profile $R$ that generates $G$ with $\# R = 2r$. For odd $s \in [2r] $, the $s$-th row of $R$ is defined as the concatenated vector
\[
R_{s, \cdot}  =
\left( R_{s, \cdot}^{(1)} , R_{s, \cdot}^{(2)} , 
\ldots, R_{s, \cdot}^{(c)}  \right), 
\]
whereas for even $s\in [2r]$, the $s$-th row of $R$ is defined as
\[
R_{s, \cdot}  =
\left(R_{s, \cdot}^{(c)}, R_{s, \cdot}^{(c-1)} , 
\ldots ,  R_{s, \cdot}^{(1)}  \right).
\]
Let $i, j \in V^{(\ell)} $ for $\ell \in [c]$ and $s \in [2r]$. Then $i$ precedes $j$ in $R_{s, \cdot}$  if and only if $i$ precedes $j$ in $R_{s, \cdot}^{(\ell)}$. Therefore,  
 $R_{s , \cdot}$ and $R_{s,\cdot}^{(\ell)}$ give rise to the same number of votes for $i$ and therefore also for $j$.
Thus, in particular, $R_{s,\cdot} $ and $R_{s , \cdot}^{(\ell)}$ either generate the same arrow between $i$ and $j$ or no arrow at all.

Next, let $i \in V^{( \ell_1 )} $ and $j \in V^{( \ell_2 )}$ for $\ell_1,\ell_2 \in [c]$. Consider without loss of generality $\ell_1 <\ell_2$. For any odd $s \in [2r]$,  $i$  precedes $j$ in $R_{s,\cdot}$, whereas for any even $s \in [2r]$, $j$  precedes $i$ in $R_{s,\cdot}$. Hence, $i$ precedes $j$ and $j$ precedes $i$
exactly $r$ times. Therefore, in this case, there is no arrow between $i $ and $j$ as intended. In fact, by hypothesis, the directed graphs $G^{(\ell_1)} $ and $G^{(\ell_2)} $ are not connected. 
\end{proof}

The following remark considers the same framework as in the previous lemma. 

\begin{rem}\label{stessi-voti}{\rm
Consider the voting profile $R$ constructed in the previous proof. Let $i, j \in  V^{(\ell )}$ for some $\ell \in [c]$. We have already noted in the proof of Lemma~\ref{EMnew} that 
\[
i \text{ precedes } j \text{ in } R_{s , \cdot} 
\iff i \text{ precedes } j \text{ in } R^{(\ell )}_{s , \cdot} \,.
\]
Hence, the number of preference lists of $R$ in which $i$ precedes $j$ is equal to the number of preference lists of $R^{(\ell)}$ in which $i$ precedes $j$. From the point of view of the match between $i$ and $j$, this means that $i$ and $j$ obtain the same number of votes in the case where the candidate set is $\bigcup_{r=1}^c V^{(r)} $ and the voting profile is $R$ as in the case where the candidate set is $V^{(\ell)}$ and the voting profile is $R^{(\ell)}$.}
\end{rem}

For the candidate set~$\{1,2, \ldots, 8 \}$, Stearns' algorithm~\cite{Stearns} allows to determine a voting profile that generates $G_{3} = ([8] , E_3)$.
By direct analysis of the 28 pairs $\{i,j\}$ with $i \neq j \in [8]$ one can easily check that $R_3$ generates $G_{3}$. Moreover, in every match between two candidates belonging to $[8]$, the winner gets six votes and the loser gets four votes. 
\bigskip 
\begin{equation} \label{St}
R_3= 
\begin{bmatrix}
  8 & 1 &5 & 2 & 3 & 4 & 6 & 7 \\
 2 & 7 & 6 & 4 & 1 & 3 & 5 & 8 \\
 2  & 7 & 3 & 6 & 4 & 5 & 8 & 1 \\
 4 & 1 & 3 & 8 & 5 & 6 & 7 & 2 \\
  3 & 5 & 2 & 6 & 7 & 1 & 4 & 8 \\
  6 &  8 & 4 & 1 & 5 & 7 & 2 & 3  \\
 6 & 7 & 3 & 4 & 8 & 1 & 2 & 5 \\ 
  8 & 5 & 2 & 7 & 1 & 4 & 3 & 6\\ 
 7 & 8 & 5 & 6 & 3 & 4 & 1 & 2 \\
  1 & 2 & 3 & 4 & 5 & 6 & 7 & 8
\end{bmatrix}
\end{equation}

\medskip 
Now, let us introduce a notation for the construction carried out in the proof of the previous lemma for two graphs only. 

\begin{definition}\label{bowtie}
Let $G^{(1) } =( V^{(1)} , { E}^{(1)})  $ and 
 $G^{(2) } =( V^{(2)},{ E}^{(2)})$ be two directed graphs 
such that $V^{(1)}  \cap  V^{(2)} = \emptyset $. Let 
$R^{(1)}$ (resp.\ $R^{(2)}$) be a voting profile that  generates  $G^{(1)} $  (resp.\ $G^{(2)}$) 
and such that 
 $\# R^{(1 )} = \# R^{(2)}=2r$. The voting profile 
$R^{(1)} \bowtie R^{(2)}$ on the set of candidates (or vertices) $  V^{(1)} \cup V^{(2)}$ is such that  
\begin{itemize}
\item[(1)] for odd $s \in [2r]$, the $s$-th row is the concatenated vector 
\[
 ( R^{(1 ) } \bowtie R^{(2)})_{s, \cdot} 
:=
 (R^{(1 )}_{s, \cdot}, 
 R^{(2 )}_{s , \cdot})\,;
\]
\item[(2)]  for even $s\in [2r] $, the $s$-th row is
\[
( R^{(1)} \bowtie R^{(2)})_{s, \cdot} 
:=
 (R^{(2 )}_{s,\cdot}, 
 R^{(1 )}_{s,\cdot})\, . 
\]
\end{itemize}
In particular, $\# ( R^{(1)} \bowtie R^{(2)}   ) =\# R^{(1)}  =\# R^{(2)} = 2r$. 
\end{definition}

We also define the matrix $\boldsymbol{1}$ formed of all ones. When we write $R  + 2^n \cdot \boldsymbol{1}$, we mean that each element of $R$ is increased by $2^n$.
\medskip 

Now we define four vectors associated with the set of candidates $ [2^{n}]$, for $n \geq 4$. Let 
\begin{equation}\label{abab}
\begin{array}{ll}
A^{(n)}_1 =(u  \in [2^{n-1}]: u \text  { is odd}  )= (1,3, \ldots , 2^{n-1} -1)\,, \\
B^{(n)}_1 =( u  \in [2^{n}]\setminus [2^{n-1}]   :  u \text  { is odd})  
= (2^{n-1}+1 ,  2^{n-1}+3, \ldots , 2^{n}-1 ) \,,   \\
A^{(n)}_2 = ( u  \in [2^{n-1}]: u \text  { is even}  )=(2,4, \ldots , 2^{n-1})\,, \\
B^{(n)}_2 =( u  \in [2^{n}]\setminus [2^{n-1}]   :  u \text  { is even}) = 
(2^{n-1}+2 ,  2^{n+2}+4, \ldots , 2^{n})\,, 
\end{array}
\end{equation}
where the elements of the vectors are in increasing order. 

Now, let us define the preference lists on the set of candidates $[2^{n}]$ as 
\begin{equation} \label{iterasimile}
\begin{array}{c}
v^{(n)}_1 =(A^{(n)}_1 ,  B^{(n)}_1 , A^{(n)}_2 ,  B^{(n)}_2) \,, \\
v^{(n)}_2 =( \bar A^{(n)}_2 ,  \bar B^{(n)}_2 ,  \bar A^{(n)}_1   ,    \bar B^{(n)}_1)  \,,  \\
v^{(n)}_3 =  (B^{(n)}_1 , A^{(n)}_2 ,  B^{(n)}_2,  A^{(n)}_1 )  \,, \\
v^{(n)}_4 =( \bar B^{(n)}_2 , \bar  A^{(n)}_1 ,  \bar B^{(n)}_1 , \bar A^{(n)}_2 ) \,.
\end{array}
\end{equation}
For an integer $n \geq 4$,  we recursively define
\[ 
R_{n} =\left  ( v^{(n)}_1 ,   v^{(n)}_2 ,   v^{(n)}_3 ,   v^{(n)}_4 ,            \left[   R_{n   -1} \bowtie 
 ( R_{n   -1}  + 2^{n+2} \cdot \boldsymbol{1}  ) \right ]  \right )_V\, .
\] 

In particular, this construction shows that $\# R_{n+1} =  \# R_{n} + 4$, therefore $\# R_{n} =4(n-3) +\# R_3 =4 n -2$.  We also consider the voting profile ${\tilde R}_{n}$ which is identical to $R_{n} $ with the last row being deleted. Thus, $\#{ \tilde R}_{n} = 4 n -3$.

\begin{theorem} \label{tf}
For any integer $n \geq 3$, both voting profiles $R_n $ and ${ \tilde R}_{n} $  generate $G_{n} = ([2^{n}] , E_n )$.  Moreover, if $(i,j)  \in  E_n $ then $i $ precedes $j $ in  $2n $ preference lists of $R_n $,
whereas $j$ precedes $i$ in $2n-2$ preference lists of $R_n$.
\end{theorem}

\begin{proof}
The proof proceeds by induction. For $n =3$, we have already shown that voting profile $R_3$ generates $G_{3}$ (see~\eqref{St}). 

Consider $G_{n}= ([2^{n}],  E_n)$ and $G_{n+1}= ([2^{n+1}], E_{n+1})$.
Now, we show that  
\[
R_n \text{ generates } G_n  \implies R_{n+1}  \text{ generates } G_{n+1}\,. 
\]
First, we observe that by applying Lemma~\ref{EMnew}, see also Lemma~2 of \cite{ErdosMoser}, we obtain that $ v^{(n+1)}_1$ and $v^{(n+1)}_2$ generate
all and only the arrows of $E_{n+1}$ pointing from the vertices of~$[2^{n}]$ to the vertices of $[2^{n+1}] \setminus [2^{n}]$, that is, there is an arrow from $i \in [2^{n}] $ to $j \in [2^{n+1}] \setminus [ 2^{n} ]$ if and only if $i+j $ is even. Similarly, we obtain that $ v^{(n+1)}_3$ and 
$v^{(n+1)}_4$ generate 
all and only the arrows of $E_{n+1}$  pointing from the  vertices in  
$ [2^{n+1}]  \setminus [ 2^{n}]$  to the vertices in  $[2^{n}]$. 

If we eliminate all the arrows already generated by the first four preference lists, then we get two disconnected directed graphs that are isomorphic to $G_{n}$. The first graph coincides with $G_{n}$ and the second, denoted 
\[
G'_{n}= ( [2^{n+1}] \setminus  [2^{n}], {E}^{\prime}_{n})= G_{n+1} \big [ [2^{n+1}] \setminus  [2^{n}] \big ]\,, 
\]
is obtained from $G_{n}$ by mapping any vertex $i \in [2^{n}]$ to $i+2^{n}$.

\medskip 
Now, by Lemma~\ref{EMnew}, we obtain that 
 $R_{n} \bowtie ( R_{n}  + 2^{n+3} \cdot \boldsymbol{1}) $ generates all the arrows 
 of the two isomorphic directed graphs $G_{n}$ and $G'_{n}$.

Now, for $n \in \N$ and under the hypothesis that $(i,j) \in E_{n}$, we show that $i$ precedes $j $ in $2n $ preference lists of $R_n$ and $j $ precedes $i$  in the remaining $ 2n -2 $ ones. Let us analyze  the three possible cases. 
\begin{itemize}
\item[(a)] The case $i \in [2^{n-1}]$ and $j \in [2^{n}]   \setminus  [2^{n-1}]$  with $i+j $ even. Then $i $ precedes $j$ in $v^{(n)}_1$ and in $v^{(n)}_2$. The other preference lists of $R_n$ are equally divided between those in which $i$ precedes $j$ and those in which, vice versa, $j$ precedes $i$. 
\item[(b)] The case $i \in [2^{n}]   \setminus  [2^{n-1}]$ and $i \in [2^{n-1}]$ with $i+j $ odd. Then $i$ precedes $j$ in $v^{(n)}_3$ and in $v^{(n)}_4$.The other preference lists of $R_n $ are equally divided between those in which $i$ precedes $j$ and those in which, vice versa, $j$ precedes $i$. 
\item[(c)] The cases where both $i, j \in [2^{n-1}]$ or both $i, j \in [2^{n}]   \setminus  [2^{n-1}]$. 
Then, among
$v^{(n)}_1$, $v^{(n)}_2$, $v^{(n)}_3$, and $v^{(n)}_4$, there are two preference lists in which $i$ precedes $j $ and two preference lists in which $j$ precedes $i$. 
Clearly, $\#  \left[   R_{n   -1} \bowtie  ( R_{n-1}  + 2^{n+2} \cdot \boldsymbol{1}  ) \right ] = 4n -6$. 
By Remark~\ref{stessi-voti} and induction, it holds that
$ R_{n-1} \bowtie  ( R_{n-1}  + 2^{n+2} \cdot \boldsymbol{1})$ contains $2(n-1)$ preference lists in which $i $ precedes $j $, whereas $j$ precedes
$i$ in the remaining $2(n-1)-2$ preference lists. Therefore, 
the total number of preference lists in which $i$ precedes $j$ is $2n$ in  $R_n$. 
\end{itemize}

 Now, we prove that $\tilde {R}_n$  generates $G_{n}$. We have seen that for any $i , j \in [2^{n}]$
 there is a difference of two preference lists in $R_n $ in favor of the winner of the match. Therefore, $R_n$ and $\tilde {R}_n$ give rise to the same result for any match.
\end{proof}
\begin{example}
{\rm
We construct the voting profile $R_4$ that generates $G_{4} =([2^4], E_4)$ starting from $R_3$. We recall that 
\[ 
R_{4} =\left  ( v^{(4)}_1 ,   v^{(4)}_2 ,   v^{(4)}_3 ,   
v^{(4)}_4 ,            \left[   R_{3} \bowtie 
 ( R_{3}  + 2^{3} \cdot \boldsymbol{1}  ) \right ] \right )_V .
\]
 By applying Lemma~2 of \cite{ErdosMoser} 
 to 
\[
A_1 = ( 1,3,5,7), B_1 =( 9,11,13,15    )       , A_2 = ( 2,4,6,8),  B_2 =( 10,12,14,16  ) \, ,
\]
we obtain, associated with the first two voters, the following  preference lists  
\begin{equation} \label{iterata}
\arraycolsep=2pt
\begin{array}{rl}
 v^{(4)}_1& =( 1,3,5,7,9,11,13,15,2,4,6 ,8,10,12,14,16) \\
 v^{(4)}_2&= (8,6,4,2,16,14,12,10, 7,5,3,1, 15,13,11,9 )\,,
\end{array}
\end{equation}
and, similarly, for two other voters
\begin{equation} \label{iteratasec}
\arraycolsep=2pt
\begin{array}{rl}
 v^{(4)}_3 &=( 9,11,13,15,2,4,6 ,8,10,12,14,16, 1,3,5,7) \\
 v^{(4)}_4&= (16,14,12,10, 7,5,3,1, 15,13,11,9, 8,6,4,2 )\,.
\end{array}
\end{equation}
Now,  by eliminating from $G_{4}$ all the arrows already generated with the first four voters, we obtain two graphs that are isomorphic to $G_{3}$. Hence, by applying Lemma~\ref{EMnew}, 
we can construct the remaining preference lists of ten voters to complete the voting profile. 
Hence, voting profile $R_4$ is composed of the following preference lists: 
\[
\arraycolsep=2pt
\begin{array}{rl}
 v^{(4)}_1 &=( 1,3,5,7,9,11,13,15,2,4,6 ,8,10,12,14,16) \\
 v^{(4)}_2&= (8,6,4,2,16,14,12,10, 7,5,3,1, 15,13,11,9  ) \\
 v^{(4)}_3 &=( 9,11,13,15,2,4,6 ,8,10,12,14,16, 1,3,5,7) \\
 v^{(4)}_4&= (16,14,12,10, 7,5,3,1, 15,13,11,9, 8,6,4,2 ) \\
  v^{(4)}_5&= (8 , 1 ,5 , 2 , 3 , 4 , 6 , 7, 16, 9, 13, 10, 11, 12,14,15 ) \\
  v^{(4)}_6&= (10, 15, 14, 12, 9, 11, 13, 16 , 2 , 7 , 6 , 4 , 1 , 3 , 5 , 8)\\ 
  v^{(4)}_7 &= (2  , 7 , 3 , 6 , 4 , 5 , 8 , 1 ,  10, 15, 11, 14, 12, 13,16,9)   \\
 v^{(4)}_8 &= (12, 9, 11,16,13,14, 15, 10,   4 , 1 , 3 , 8 , 5 , 6 , 7 , 2) \\
 v^{(4)}_9 &= ( 3 , 5 , 2 , 6 , 7,  1 , 4 , 8, 11, 13, 10, 14, 15, 9, 12, 16)   \\
 v^{(4)}_{10}&= (14, 16,  12, 9, 13, 15, 10, 11,   6 ,  8 , 4 , 1 , 5 , 7, 2 , 3)\\
 v^{(4)}_{11}&= ( 6 , 7 , 3 , 4 , 8 , 1 , 2 , 5 , 14,  15, 11, 12, 16, 9,  10, 13) \\
 v^{(4)}_{12}&= ( 16, 13 , 10, 15, 9, 12, 11, 14, 8 , 5 , 2 , 7 , 1 , 4 , 3 , 6 ) \\
 v^{(4)}_{13}&=( 7 , 8 , 5 , 6 , 3 , 4 , 1 , 2 ,  15, 16, 13, 14,11, 12, 9, 10 ) \\ 
 v^{(4)}_{14}&= (9, 10,  11, 12,13, 14,  15, 16, 1 , 2 , 3, 4 ,  5 , 6 , 7 , 8)
\end{array}
\]
}
\end{example}

We present a simple corollary of Theorem \ref{tf} showing that for a knockout tournament with $2^n$ candidates, for any $m\geq 4n-3$ there exist voting profiles of size $m$ such that all candidates are admissible winners of the tournament for suitable brackets. 

\begin{corollary}\label{corSIZE}
    For any integer $n \geq 3$ and any $m \geq 4n-3$, there exists a voting profile~$R$ that generates $G_{n}$ of size $\#R =m $.
\end{corollary}

\begin{proof}
    Let $G$ be a directed graph and $R$ a voting profile that generates $G$. Then, it is well known that one can construct a voting profile $R'$ that generates $G$ with $\#R' = \#R +2$. This can be done by adding a preference list $v'$ and the reverse preference list $\bar {v'}$, see~\cite{McGarvey1953}. Therefore, starting from $\tilde{R}_n$, one can construct a voting profile $R'$ of odd size $m \geq 4n-3$ generating $G_n$. Analogously, starting from $R_n$, one can construct a voting profile $R'$ of even size $m\geq 4n-2$  generating $G_n$.
\end{proof}

\section{Random Number of Voters}\label{sec:random}

In many practical situations, the number of voters is random,
such as the viewers of a television program or the electorate in a political election (see, e.g., \cite{FP99, makris2008, CS23}). In the context of a random number of voters, the Poisson framework has become the natural and standard assumption.
In the following,  we will make use of the well-known thinning property of the Poisson random variable (see, e.g., \cite{Bre2020}).

In the previous section, associated with a number of $2^{n} $ candidates, we have constructed the voting profile $R_{n}$ consisting of $4n-2$ preference lists, for $n \geq 3$. Now, we assume that the number of voters is a random variable following a Poisson distribution $\mathrm{Poi}(\lambda_n)$, and each voter selects a preference list independently and uniformly at random from $R_{n }$. By the thinning property, in this way, we obtain $4n-2$ independent Poisson distributions with the same expectation $\lambda_n/ (4n-2)$, say $(W^{(n)}_1 ,\ldots ,W^{(n)}_{4n -2})$, where $W^{(n)}_\ell$ is the number of voters who have selected the $\ell$-th row of $R_n$. We denote by 
${\hat R }_n (W^{(n)}_1 ,\ldots ,W^{(n)}_{4n -2} )$ the random voting profile having 
$W^{(n)}_\ell$ rows (preference lists) equal to the $\ell$-th row of $R_n$, for every $\ell \in [4n-2]$. The (random) majority graph generated by ${\hat R}_n (W^{(n)}_1 ,\ldots ,W^{(n)}_{4n -2} )$ is denoted by
\[
\widehat G_n (W^{(n)}_1 ,\ldots ,W^{(n)}_{4n-2}   )  =\left([2^{n}], \hat{E}(W^{(n)}_1 ,\ldots ,W^{(n)}_{4n -2}) \right)\,.
\]

We are interested in understanding when, with high probability, the majority graph~$\widehat G_n$, associated with the random preference profile, coincides with the majority graph $G_n$, defined in Section~\ref{sec2bis}. 
Therefore, for $n \geq 3 $, we will study the following events
\[
F_n^{(\lambda_n)} =\left \{   \widehat G_n   (W^{(n)}_1 ,\ldots ,W^{(n)}_{4n -2} )   \neq G_n \right \}, 
\]
where $\sum_{\ell =1}^{4n-2}W^{(n)}_\ell \sim  \mathrm{Poi}(\lambda_n) $. 

Now, we need a brief introduction to Chernoff bound. 
For a real random variable~$W$ and a real constant $a <\mathbb{E}(W)$, the Chernoff bound states that 
\[
\mathbb{P}(W \leq a ) \leq e^{- I_W (a)}\,,
\]
where $I_W$ is the  \emph{rate function of} $W$: 
\[
I_W (a): = \sup_{t \leq 0 } \{ at - \ln \left (\mathbb{E}(e^{tW}) \right )\}\,. 
\]
It corresponds to the  Legendre--Fenchel transform of the generating function.

Let $X$ and $Y$ be independent random variables with $ X \sim  \mathrm{Poi}(\lambda)$, $ Y \sim  \mathrm{Poi}(\mu)$ and $  \lambda > \mu >0 $. We consider the random variable $(X-Y)$ having $\mathbb{E}(X-Y) >0$.  The rate function $I_{X-Y}$ of the random variable~$X -Y$ calculated at zero is 
\[
I_{X-Y} (0) =\sup_{t \leq 0} \Big\{- \ln \big(\mathbb{E} (e^{t (X -Y)}) \big)\Big\}= \sup_{t \leq 0}  \Big \{-\ln \big(\mathbb{E} (e^{t X}) \mathbb{E} (e^{-t Y}) \big)      
\Big \}=
\]
\begin{equation}
= \sup_{t \leq 0}  \Big \{-\ln \big(\mathbb{E} (e^{t X}) \big ) -\ln \big (\mathbb{E} (e^{-t Y}) \big) \Big \}
 =\sup_{t \leq 0} \big\{ \lambda + \mu - \lambda e^t - \mu e^{-t} \big\}. 
\end{equation} 
The last equality follows from the fact that the
moment-generating function of $X $ is given by
$\mathbb{E} (e^{tX}) = e^{\lambda (e^t -1)}$ and that of $Y$ 
by
$\mathbb{E} (e^{-tY}) = e^{\mu (e^{-t} -1)}$. Differentiating, one obtains that the maximum of $\lambda + \mu - \lambda e^t - \mu e^{-t} $ is attained at $t = \frac{1}{2}\ln (\frac{\mu}{\lambda}) <0 $. Therefore, 
 \begin{equation}\label{ratef} 
 I_{X-Y} (0) =  \lambda + \mu -2 \sqrt{\lambda \mu}\,. 
\end{equation}

\medskip 

In the following theorem, we consider $2^{n}$ candidates with $n \geq 3 $ and a random number of voters following a Poisson distribution $\mathrm{Poi}(\lambda_n)$, 
with each voter randomly selecting a preference list independently and uniformly from the preference lists of $R_{n}$. 
If $\lambda_n$ is larger than $16 (\ln (2)+ \epsilon) n^3$, with $\epsilon >0 $, then $\mathbb{P} (F_n^{(\lambda_n)} )$
exponentially decreases to zero when $n$ grows to infinity. 
Hence, the following Theorem~\ref{random} will state that the result of the deterministic voting profile and the randomly generated one yield the same majority graph with high probability, provided $\lambda_n $ is large. 
Therefore, even in the random case, one could predict, with high probability, the outcome of the knockout tournament only by knowing the initial order of the candidates.  

We are ready to state the following theorem.

\begin{theorem}\label{random}
Consider the sequence of events $(F_n^{(\lambda_n)} )_{n \in \N_0}$. If 
\begin{equation}\label{liminf}
    \liminf_{n \to \infty}\frac{\lambda_n}{n^3} \,>\, 16\ln{2} \,,
\end{equation}
then there exist $K>0$ and $\beta>0 $ such that, for any $n \in \N$,
\[
\mathbb{P} (F_n^{(\lambda_n)} ) < K e^{-\beta n } \,  . 
\]
Therefore, $\lim_{n \to \infty } \mP(F_n^{(\lambda_n)})=0$.
\end{theorem}

\begin{proof}
  In the deterministic case, voting profile $R_n $ with $\# R_n = 4n -2$ generates the majority graph $G_{n}= ([2^{n}  ] ,  E_n )$.  
Let  $ i,j  \in [2^{n}]$. Without loss of generality, we suppose that $(i , j )$ is an arrow of $ E_n$. Hence, among the $4n -2$ preference lists, there are  $2n$ preference lists in which 
 $i$ precedes $j$ (see Theorem~\ref{tf}). 
Without loss of generality, we also suppose that the first $2n $ preference lists of $R_n$ are in favor of $i $, and that the remaining $2n -2 $  preference lists are in favor of $j$.  

Let $ X^{(n)} := \sum_{k =1}^{2n} W^{(n)}_k$ and $ Y^{(n)} : = \sum_{k =2n+1}^{4n-2} W^{(n)}_k $. 
Since the random variables $ X^{(n)}$ and 
$ Y^{(n)}$ are independent and distributed as 
$\displaystyle   \mathrm{Poi} \left (\frac{2n}{4n-2}  \lambda_n  \right  ) $  and    $\displaystyle \mathrm{Poi} \left  ( \frac{2n-2}{4n-2} \lambda_n  \right  ) $, respectively,
the rate function in~\eqref{ratef} becomes 
\arraycolsep=2pt
\begin{eqnarray} \label{newforexample}
I_{ X^{(n)} - Y^{(n)} } (0)& =& \lambda_n - 2 \sqrt{\left (\frac{\lambda_n}{2}+ \frac{\lambda_n}{4n-2}            \right   )       \left (\frac{\lambda_n}{2}- \frac{\lambda_n}{4n-2}            \right   )           } \nonumber   \\&=& \lambda_n \left ( 1-  \sqrt{1 -   \frac{1}{(2n-1)^2}                     }  \right )  \\&=& \lambda_n \left (\frac{1}{2(2n-1)^2 }+ O \left ( \frac{1}{(2n-1)^4 }\right ) \right ) \nonumber
>
c (2n-1)              \,, 
\end{eqnarray}
for $n $ large enough and some $c > \ln{2}$. The last equality follows by Taylor expansion, and the inequality follows from the hypothesis in \eqref{liminf}.

For the random voting profile one has $(i, j) \not \in  \hat{E} (W^{(n)}_1 ,\ldots ,W^{(n)}_{4n -2})$   
if and only if $X^{(n)} - Y^{(n)}   \leq 0$.  Hence, 
by the Chernoff bound, we have 
\arraycolsep=2pt
\begin{eqnarray*}
\mathbb{P}((i , j ) \not \in  \hat{E} (W^{(n)}_1 ,\ldots ,W^{(n)}_{4n -2}   )  )
&=&\mathbb{P}( X^{(n)}  \leq  Y^{(n)}  ) \\
&\leq & \exp{\left \{ -I_{ X^{(n)} - Y^{(n)}  }  (0)\right \} }
\leq \exp{\left \{ -  c (2n-1) \right \}}\,, 
\end{eqnarray*}
as before, for some $c > \ln{2}$  and $n$ large enough. 
The number of arrows  in $E_n $  is given by $2^{n-1}(2^{n}-1)< 2^{2n-1}$. Therefore, by the union bound, 
\arraycolsep=2pt
\begin{eqnarray*}\label{probe}
\mP (F_n^{(\lambda_n)} ) &=& \mathbb{P}\left ({\hat R }_n (W^{(n)}_1 ,\ldots ,W^{(n)}_{4n -2}   ) \text{ does not generate } G_{n} \right )\\
&& \leq  
 2^{2n-1} \exp{\left \{ -  c (2n-1)               \right \} }= \exp{\left \{(\ln{2}-c) (2n-1)\right\}  =e^{c- \ln 2} \cdot e^{2(\ln{2} - c)n}          } \,. 
\end{eqnarray*}
Hence, taking $\beta = 2 ( c- \ln{2}) >0 $ and  $ K= e^{\beta/2}$ one has 
\[
\mP (F_n^{(\lambda_n)} ) <  K e^{- \beta n          } , 
\]
for each $n\geq 3 $. Therefore, $\lim_{n \to \infty } \mP (F_n^{(\lambda_n)} )  =0$.
\end{proof}

\begin{rem} {\rm
We highlight that similar results could be obtained for other distributions through the Chernoff bounds, for instance, for the multinomial distribution. In the latter case, the number of voters should be fixed, and the distribution over the preference lists is multinomial with identical class probabilities, again corresponding to uniform selection from the deterministic voting profile $R_n$.
}
\end{rem}

\begin{example}{\rm
    In this example, we determine the values of $\lambda_3$ that guarantee $\mathbb{P}(F_3^{(\lambda_3)} ) \leq\frac{1}{100} $. 
    Hence, we are considering $2^3 = 8$ candidates.
    By \eqref{newforexample}, for $n =3 $, we obtain
    \[
    I_{X^{(3)}-Y^{(3)}}(0) = \lambda_3 \left(1 -\frac{1}{5}\sqrt{24} \right)\,. 
    \]
    By the Chernoff bound and the union bound over the pairs of candidates, we have
       \[
       \mathbb{P} (   F_3^{(\lambda_3)}   ) \leq 
     \binom{8}{2} \cdot e^{-I_{X^{(3)}-Y^{(3)}}(0)}=28 \cdot  e^{-\lambda_3 \left(1 -\frac{1}{5}\sqrt{24} \right)}\,.
       \]
     Hence, $\lambda_3 \geq 393.86$ implies $ \mathbb{P} (F_3^{(\lambda_3)}  ) \leq \frac{1}{100}$. Therefore, 
     one might consider the following situation. A random number of people, Poisson-distributed with $\lambda=394$, randomly draw with replacement preference lists from an urn containing the ten lists of $R_3$. At the end of this random procedure, the tournament organizer receives a fraudulent request to have candidate $i\in [8]$ win the tournament. The organizer, without actually knowing the voting profile, can then use the bracket $\boldsymbol{\pi}^{(i)}$ and the chosen candidate $i$ will have a probability of winning greater than $99\%$.}
\end{example}

\section{Conclusion} \label{conc}
In this paper, we have shown that for any knockout tournament with $2^n$ candidates ($n \geq 3$), there exist voting profiles
that admit any candidate as the winner of the tournament. This can be accomplished by adjusting the bracket. An interesting generalization is the problem of whether any two candidates could appear as finalists.

We emphasize that the majority graphs $(G_n )_{n \geq 3}$, the voting profiles $({\tilde R}_n)_{n \geq 3}$ and the brackets necessary to obtain the result, can be explicitly constructed, see Theorems~\ref{albero}--\ref{tf}. Moreover, all of these constructions require a few steps only and can easily be implemented on a computer. Note that the voting profile ${\tilde R}_{n}$, acting on the set of candidates $[2^n]$, contains $4n -3$ preference lists, which means that the size of the commission increases logarithmically with the number of candidates present in the knockout tournament. 

We have also tackled the same problem in the setting of a Poisson-distributed number of voters. Using Chernoff bounds, we have shown that the results of the deterministic and random voting profiles are, with high probability, identical when the expected number of voters is larger than $C n^3$, with $C>16 \ln 2$, see Theorem~\ref{random}. 

For a larger number of candidates, it seems infeasible that a single voter could order all of them in a preference list. Therefore, the case where each voter only presents her top-$k$ candidates seems like an interesting path to explore in future work.

Finally, we mention that one can also associate a majority graph with a set of random variables (see, e.g.,~\cite{MontesEtAl, SaSa2024,steinhausparadox}). Recently, it was shown that the restricted and particular class 
of hitting times for Markov chains is able to generate any majority graph~\cite{D21ranking}. Furthermore, in the scientific literature, the concept of majority graph has been generalized to the concept of {\em ranking pattern}, and it was also proved that any ranking pattern can be realized using a single voting profile~\cite{DS-load, DSvoting, Saari(dictionary)}. The latter 
result could be useful for studying non-knockout tournaments and trying to see whether they can be manipulated.

\section*{Acknowledgements}
We are grateful to the referees for their valuable input. This work was partially supported by ``Dependent Point Processes: Neuronal Networks, Voting Theory, and Ageing Properties" funded by La Sapienza University, Roma 
 RP123188F3FCA243. 

The second author also thanks the Department of Data Analysis and Mathematical Modelling, Ghent University, for its kind hospitality.

\bibliographystyle{abbrv}

\begin{thebibliography}{10}

\bibitem{aziz18}
H.~Aziz, S.~Gaspers, S.~Mackenzie, N.~Mattei, P.~Stursberg, and T.~Walsh.
\newblock Fixing balanced knockout and double elimination tournaments.
\newblock {\em Artificial Intelligence}, 262:1--14, 2018.

\bibitem{Bre2020}
P.~Br{\'{e}}maud.
\newblock {\em Markov Chains--{G}ibbs Fields, {M}onte {C}arlo Simulation and
  Queues}, volume~31 of {\em Texts in Applied Mathematics}.
\newblock Springer, Cham, 2020.
\newblock Second edition.

\bibitem{CS23}
R.~Cominetti, M.~Scarsini, M.~Schr\"{o}der, and N.~Stier-Moses.
\newblock Approximation and convergence of large atomic congestion games.
\newblock {\em Math.\ Oper.\ Res.}, 48(2):784--811, 2023.

\bibitem{Conitzer16}
V.~Conitzer and T.~Walsh.
\newblock Barriers to manipulation in voting.
\newblock In {\em Handbook of Computational Social Choice}, pages 127--145.
  Cambridge Univ. Press, New York, 2016.

\bibitem{D21ranking}
E.~De~Santis.
\newblock Ranking graphs through hitting times of {M}arkov chains.
\newblock {\em Random Structures \& Algorithms}, 59(2):189--203, 2021.

\bibitem{DS-load}
E.~De~Santis and F.~Spizzichino.
\newblock Construction of aggregation paradoxes through load-sharing models.
\newblock {\em Adv.\ in Appl.\ Probab.}, 55(1):223--244, 2023.

\bibitem{DSvoting}
E.~De~Santis and F.~Spizzichino.
\newblock Construction of voting situations concordant with ranking patterns.
\newblock {\em Decis.\ Econ.\ Finance}, 46(1):129--156, 2023.

\bibitem{ErdosMoser}
P.~Erd\H{o}s and L.~Moser.
\newblock On the representation of directed graphs as unions of orderings.
\newblock {\em Magyar Tud.\ Akad.\ Mat.\ Kutat\'o Int.\ K\"ozl.}, 9:125--132, 1964.

\bibitem{FP99}
T.~J. Feddersen and W.~Pesendorfer.
\newblock Abstention in elections with asymmetric information and diverse
  preferences.
\newblock {\em American Political Science Review}, 93(2):381--398, 1999.

\bibitem{Gib1973}
A.~Gibbard.
\newblock Manipulation of voting schemes: a general result.
\newblock {\em Econometrica}, 41:587--601, 1973.

\bibitem{isaksson2012geometry}
M.~Isaksson, G.~Kindler, and E.~Mossel.
\newblock The geometry of manipulation—a quantitative proof of the   Gibbard-Satterthwaite theorem.
\newblock {\em Combinatorica}, 32(2):221--250, 2012.

\bibitem{kim2015fixing}
M.~P. Kim and V.~Vassilevska~Williams.
\newblock Fixing tournaments for kings, chokers, and more.
\newblock In {\em Proceedings of the
International Joint Conference on Artificial Intelligence}, pages 561--567, 2015.


\bibitem{KSV17}
M.~P. Kim, W.~Suksompong, and V.~Vassilevska~Williams.
\newblock Who can win a single-elimination tournament?
\newblock {\em SIAM J. Discrete Math.}, 31(3):1751--1764, 2017.


\bibitem{laslier97}
J.-F. Laslier.
\newblock {\em Tournament Solutions and Majority Voting}, volume 7 of Studies in Economic Theory,
\newblock Springer, 1997.

\bibitem{makris2008}
M.~Makris.
\newblock Complementarities and macroeconomics: Poisson games.
\newblock {\em Games and Economic Behavior}, 62(1):180--189, 2008.

\bibitem{MS2023}
P.~Manurangsi and W.~Suksompong.
\newblock Fixing knockout tournaments with seeds.
\newblock {\em Discrete Appl.\ Math.}, 339:21--35, 2023.

\bibitem{Marchand2002}
E.~Marchand.
\newblock On the comparison between standard and random knockout tournaments.
\newblock {\em The Statistician}, 51(2):169--178, 2002.

\bibitem{McGarvey1953}
D.~C. McGarvey.
\newblock A theorem on the construction of voting paradoxes.
\newblock {\em Econometrica}, 21:608--610, 1953.

\bibitem{MontesEtAl}
I.~Montes, M.~Rademaker, R.~P\'{e}rez-Fern\'{a}ndez, and B.~De~Baets.
\newblock A correspondence between voting procedures and stochastic orderings.
\newblock {\em European J. Oper.\ Res.}, 285(3):977--987, 2020.

\bibitem{Mossel-Xu2020}
E.~Mossel and J.~Xu.
\newblock Seeded graph matching via large neighborhood statistics.
\newblock {\em Random Structures \& Algorithms}, 57(3):570--611, 2020.

\bibitem{rosen}
S.~Rosen.
\newblock Prizes and Incentives in Elimination Tournaments, National Bureau of Economic Research, 1985.

\bibitem{Saari(dictionary)}
D.~G. Saari.
\newblock A dictionary for voting paradoxes.
\newblock {\em J. Econom. Theory}, 48(2):443--475, 1989.

\bibitem{SaSa2024}
A.~Sah and M.~Sawhney.
\newblock The intransitive dice kernel: {$\frac{1_{x\ge y}-{1}_{x\le y}}{4} -
  \frac{3(x-y)(1+xy)}{8}$}.
\newblock {\em Probab.\ Theory and Related Fields}, 189(3-4):1073--1128, 2024.

\bibitem{sen2001}
A.~Sen.
\newblock Another direct proof of the Gibbard--Satterthwaite theorem.
\newblock {\em Economics Letters}, 70(3):381--385, 2001.


\bibitem{StaVas11}
I.~Stanton and V.~Vassilevska~Williams.
\newblock Manipulating stochastically generated single-elimination tournaments
  for nearly all players.
\newblock In {\em International Workshop on Internet and Network Economics},
  pages 326--337. Springer, 2011.
  
  

\bibitem{Stearns}
R.~Stearns.
\newblock The voting problem.
\newblock {\em Amer.\ Math.\ Monthly}, 66:761--763, 1959.

\bibitem{steinhausparadox}
H.~Steinhaus and S.~Trybu\l a.
\newblock On a paradox in applied probabilities.
\newblock {\em Bull.\ Acad.\ Polon.\ Sci}, 7(67-69):108, 1959.

\bibitem{W2010}
V.~Vassilevska~Williams.
\newblock Fixing a tournament.
\newblock In {\em Proceedings of the AAAI Conference on Artificial
  Intelligence}, volume~24, pages 895--900, 2010.

\bibitem{Williams2016}
V.~Vassilevska~Williams.
\newblock Knockout tournaments.
\newblock In {\em Handbook of Computational Social Choice}, pages 453--474.
  Cambridge Univ.\ Press, New York, 2016.

\bibitem{vu2009}
T.~Vu, A.~Altman, and Y.~Shoham.
\newblock On the complexity of schedule control problems for knockout
  tournaments.
\newblock In {\em
Proceedings of
International Conference on Autonomous Agents and Multiagent Systems}, pages 225--232, 2009.

\end{thebibliography}

\end{document}